\documentclass{amsart} 
\usepackage{amsmath,amscd} 
\usepackage{latexsym,amsmath,amssymb,mathrsfs,setspace,enumerate}
\usepackage[T1]{fontenc}
\usepackage[utf8]{inputenc}
\usepackage{lmodern} 
\usepackage{amssymb}

\usepackage{setspace}
\usepackage{graphics}
\usepackage{graphicx}
\usepackage[utf8]{inputenc}
\usepackage[english]{babel}
\usepackage{fancyhdr}

\theoremstyle{plain}

\newtheorem{prop}{Proposition} 
\newtheorem{thm}{Theorem}
\newtheorem{exam}{Example}

\newtheorem{defn}{Definition}

\newtheorem{note}[thm]{Note}
\newtheorem{res}[thm]{Result}

\title{Generalized Groups and Module Groupoids}
\author {P. G. Romeo and Sneha K K}
\address{Dept. of Mathematics, Cochin University of Science and Technology, 
Kochi, Kerala, INDIA.}
\email{$romeo_-parackal@yahoo.com,\, snehamuraleedharan007@gmail.com $}
\subjclass{20M10}
\keywords { Category, Functors, Groupoid, Generalized group, Generalized group 
groupoid Generalized module, Generalized module groupoid.}
\thanks{} 
\date{}

\begin{document}

\begin{abstract}
In this paper we discuss generalized group, provides some interesting examples. Further we introduce a generalized module as a module like structure obtained from a generalized group and discuss some of its properties and we 
also describes generalized module groupoids.

\end{abstract}
\maketitle

\section{Introduction}
The generalized groups introduced by Molaei is an interesting generalization of groups(cf.\cite{molai}).
The identity element in a group is unique, but in a generalized group there exists an identity forall
each elements. Clearly every group is a generalized group. A groupid is another generalization of a
group is a small category in which every morphism is invertible and was first defined by Brandt in the year 1926.
Groupoids are studied by many Mathematicians with different objective. One of the different approach is the structured
groupoid which is obtained by adding another structure in such a way that the added structure is compatible with the groupoid operation.
\par In this paper we introduce the module action on a generalized group and we call the resulting structure as a generalized module. Then 
we discuss some interesting examples and properties of generalized modules. Further, analogoues to generalized group groupoid we describe 
the generalized module groupoid  over a ring and obtained a relation between category of generalized module and category of generalized module groupoids.

\section{Preliminaries}
In this section we briefly recall all basic definitions and 
the elementary concepts needed in the sequel. In particular we recall the 
definitions of  categories, groupoids, generalized groups  with 
examples and  discuss some intersetng properties of these structures. 

\begin{defn} (cf.\cite{kss}) A Category ${\mathcal C} $ consists of the following data$:$ 
\begin{enumerate}
\item A class called the class of vertices or objects $\nu {\mathcal C}. $ 
\item A class of disjoint sets ${\mathcal C}(a,b)$ one for each pair $ (a,b) 
\in \nu {\mathcal C}\times \nu {\mathcal C}.$  An element $f \in{\mathcal C}$ is 
called a morphism from $ a $ to $ b, $ written $ f : a \rightarrow b$  ; $ a = 
dom\; f$ the domain of $f$  and $ b = cod\; f $  called the codomain of 
$f . $  
\item For $ a, b, c, \in \nu{\mathcal C}, $ a map
$$ \circ: {\mathcal C} ( a,b) \times {\mathcal C}( b , c)  \rightarrow {\mathcal C}( a , c) 
\,\,\text{given by}\,\, ( f , g )  \mapsto f \circ g $$
 is the composition of morphisms in $ {\mathcal C} . $
\item  for each $ a \in \nu {\mathcal C} $, a  unique $  1_a \in {\mathcal C}( 
a,a ) $ is  the identity morphism on $a.$
\end{enumerate}
These must satisfy the following axioms:
\begin{itemize}
\item(cat1) \qquad for $ f \in  {\mathcal C} ( a,b) 
, g \in  {\mathcal C}( b , c)\; and\; h \in   {\mathcal C}( c , d ), $ then  
 $$ f \circ ( g \circ h) = (f \circ g) \circ h $$
\item (cat 2)\qquad for each $ a \in \nu {\mathcal C}, f \in {\mathcal C}\; (a,b) \; 
and\; g \in {\mathcal C} (c , a),$ then 
$$1_a \circ f = f \qquad   and \qquad g \circ 1_a = g $$.
\end{itemize}
\end{defn}

Clearly $\nu \mathcal C$ can be identify as a subclass of $\mathcal C,$ and with 
this identification
it is possible to regard categories in terms of morphisms alone. The category 
$\mathcal C$ is said to
be small if the class $\mathcal C$ is a set. A morphism $f\in \mathcal C(a, b)$ 
is said to be an
isomorphism if there exists $f^{-1} \in \mathcal C(b, a)$ such that $ff^{-1} = 
1_a = e_a,$ domain identity
and $f^{-1}f = 1_b= f_b,$ range identity.
 \begin{exam}
 A group $G$ can be regarded as category in the following way; define category $\mathcal C$ with just one object 
 say $\nu \mathcal{C}=G$ and morphisma $\mathcal C = \{g: g\in G\}$ with composition in $\mathcal C$ is the binary 
operation in $G.$ Identity element in the group will be the identity morphism on  the 
vertex $G.$
 \end{exam}
  
\begin{defn}
A groupoid $\mathcal G=(\nu \mathcal G, \mathcal G)$ is a small category such that for morphisms 
$f,g\in \mathcal G$ with $cod\,f=dom\,g$ then $fg\in \mathcal G$ and every morphism is an isomorphism.  A 
groupoid $\mathcal G$ is said to be connected if for all $a\in \nu \mathcal G,\;\; \mathcal G(a, a) \neq \phi$
\end{defn}

\begin{exam}
Every group can be regarded as a groupoid with only one object.
\end{exam}
\begin{exam}
For a set $X$ the cartesian product $X\times X$ is a groupoid over $X$ with 
morphisms are the elements in $X\times X$
with the composition $ (x,y)\cdot(u,v)$ exists only when $y=u$ and is given by 
$(x,y)(u,v)=(x,v).$ In particular $(x,x)$
is the unique left identity and $(y, y)$ is the unique right identity.
\end{exam}
\begin{defn}
 
 Given two categories  $\mathcal C$ and $\mathcal D,$ a 
functor $F: \mathcal C \rightarrow \mathcal D$
 consists of two functions: the object function denoted by $\nu F$ which 
assigns to each object $a$ of $\mathcal C,$ an object
 $\nu F(a)$ of the category $\mathcal D$ and a morphism function which assigns 
to each morphism $f: a\rightarrow b$ of $\mathcal C,$
a morphism $F(f): F(a)\rightarrow F(b) $ in $\mathcal D  $ satisfies the 
following 
$$F(1_c) = 1_{F(c)}\,\text{for every}\, c \in \nu \mathcal C,\,\,\text{and}\,\, 
F(fg)= F(f) F(g)$$ 
whenever $fg$ is defined in $\mathcal C.$
\end{defn}

We denote by $\bf{Gpd}$ the category of groupoids in which objects are the 
groupoids and morphisms are the functors.\par

 A semigroup is a pair $(S, \cdot)$ where $S$ is a nonempty set
 and $\cdot$ is an associative binary operation on $S.$ A semigroup
 $S$ is said to be regular if for each $x\in S$ there exists $x'\in S$
 such that $xx'x= x.$  An element $e$ of $S$ is said to be idempotent if
 $e^2 = e.$ A band is a semigroup in which all elements are idempotents. Inverse 
 semigroup is a regular semigroup in which for each $x\in S$ there exists an 
unique $y\in S$ such that $x= xyx$ and $y = yxy.$

\subsection{Generalized Groups and Generalized group groupoids.\\} 
In the following  we recall the definitions of generalized groups and describe 
some of its properties. 

\begin{defn}(cf.\cite{molai})
 A generalized group $G$ is a non-empty set together with a binary operation 
called multiplication subject to the set of rules given below:
 \begin{enumerate}
  \item $(ab)c = a(bc)$ for all $a,\; b,\;c \;\;\in G $
  \item for each $a\in G$ there exists unique $e(a)\in G$ with $ae(a) = e(a)a= 
a$
  \item for each $a \in G$ there exists $a^{-1} \in G$  with $aa^{-1} = a^{-1}a 
= e(a)$
 \end{enumerate}
\end{defn}

It is seen that for each element $a$ in a generalized group
the inverse is unique and both $a$ and $a^{-1}$ have the same identity. Every 
abelian generalized group is a group.

\begin{defn}(cf.\cite{molai}) A generalized group $G$ is said to be normal 
generalized group if $ e(ab) = e(a) e(b)$for all elements $a,b \in G$ 
\end{defn} 
\begin{defn}(cf.\cite{molai})
 A non-empty subset $H$ of a generalized group $G$ is a generalized
subgroup of $G$ if and only if for all $a, b \in H,\;\; ab^{-1} \in H.$
\end{defn}

\begin{thm}
Rectangular  band semigroup is a normal generalized group. 
\end{thm}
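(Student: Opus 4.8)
The plan is to take $S$ to be a rectangular band, that is, a semigroup in which $x^{2}=x$ and $xyx=x$ hold for all $x,y\in S$ (equivalently, $S$ is isomorphic to a product $I\times\Lambda$ with multiplication $(i,\lambda)(j,\mu)=(i,\mu)$), and to verify the three defining conditions of a generalized group together with the normality identity $e(ab)=e(a)e(b)$ one at a time. Condition (1), associativity, is immediate since $S$ is a semigroup, so the real work is to exhibit the local identities $e(a)$ and the inverses $a^{-1}$.

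First I would guess $e(a)=a$ and check it: since every element of a rectangular band is idempotent, $ae(a)=a^{2}=a$ and $e(a)a=a^{2}=a$, so $a$ serves as a local identity for itself, which gives existence in condition (2). For uniqueness, suppose $f\in S$ also satisfies $af=a=fa$; then, applying the rectangular identity with $x=f$ and $y=a$, we get $f=faf=(fa)f=af=a$, so $e(a)=a$ is the unique element with the stated property. For condition (3) I would take $a^{-1}=a$: then $aa^{-1}=a^{2}=a=e(a)$ and likewise $a^{-1}a=e(a)$, so each element is its own inverse. Finally, for normality, given $a,b\in S$ the product $ab$ again lies in $S$, so by what was just shown $e(ab)=ab$, while $e(a)e(b)=ab$; hence $e(ab)=e(a)e(b)$, and $S$ is a normal generalized group.

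I expect the only delicate point to be the uniqueness clause in condition (2): this is the single place where one must use the full rectangular identity $xyx=x$ rather than mere idempotency. Indeed, in a general band (for instance a non-trivial semilattice) an element may admit many local identities, so the statement would fail there; the law $xyx=x$ is precisely the extra hypothesis that forces all of them to coincide with $a$. Everything else is routine and can be cross-checked against the coordinate model $I\times\Lambda$, where $e\big((i,\lambda)\big)=(i,\lambda)$ and $(i,\lambda)^{-1}=(i,\lambda)$.
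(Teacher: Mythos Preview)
Your proof is correct and follows essentially the same approach as the paper: both arguments identify $e(a)=a$ and $a^{-1}=a$ for every element of a rectangular band and then obtain normality as the trivial consequence $e(ab)=ab=e(a)e(b)$. The only cosmetic difference is that the paper carries out the uniqueness check in the coordinate model $I\times\Lambda$, whereas you use the abstract identity $xyx=x$; your remark that this identity is exactly what is needed (idempotency alone being insufficient) is a nice clarification that the paper leaves implicit.
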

\begin{proof}
A band $ B$ is a regular semigroup in which every element is an 
idempotent. Let $I,$ $\Lambda$ be non empty sets and $B = I\times \Lambda$
with multipication defined by 
$$ (i,\lambda)(j, \mu) = (i, \mu) \;\; for\;all \;\; (i,\lambda),(j, \mu)\in I\times\Lambda $$
is a semigroup and in which all elements are idempotents. Then $B$ is a rectangular band.  

For any $ (i, \lambda), (j, \mu) \in B $
$$  (i, \lambda)(j, \mu) = (j, \mu)(i, \lambda) = (i, \lambda)\;\; gives$$ 

$$ (j, \mu)= (i,\lambda)$$
 Thus we have $e(i, \lambda) = (i, \lambda)\; for\;all \; (i, \lambda) \in B$ and
  $(i, \lambda)^{-1} = (i, \lambda).$\\
  Moreover
  $$e((i, \lambda)(j, \mu)) = e(i, \lambda)e(j,\lambda)  $$
    hence $B$ is a normal generalized group.
\end{proof}
\begin{exam}
Let $G = (V(G), E(G))$ be a complete digraph without multiple edges, then each 
edge can be uniquely identified
with the starting and ending vertices. Let $d(g)$ stands for the domain and 
$r(g)$ stands for the range of the edge $g.$ 
For any $f, g \in E(G)$ define the composition of $f\; and \;g$ as the unique 
edge starting from the domain of $f$
and ending at the range of $g.$
$$ie; \quad f \circ g = h^{r(g)}_{d(f)}\;\;\; with \;\;d(h^{r(g)}_{d(f)}) 
=d(f)\;\; and 
\;\;r(h^{r(g)}_{d(f)}) = r(g).$$
The composition $\circ$ is an associative binary operation. For, $ f, g, h 
\in E(G)$ with $d(f) = v_1 \; \; r(f) = v_2,\,\,d(g) = u_1, \; \; r(g) = u_2,\,\,d(h) = w_1$ and $ r(h) = w_2$
 \begin{equation*}
 \begin{split}
 (f\circ g)\circ h &  = (h^{u_2}_{v_1}) \circ h  = h^{w_2}_{v_1}  \\
                   f\circ ( g\circ h )&  = f \circ(h^{w_2}_{u_1}) = h^{w_2}_{v_1}  \\
              ie.,\,     (f\circ g)\circ h &  =  f\circ ( g\circ h )
 \end{split}
 \end{equation*}
 also for each edge $f,$
 $$ f\circ f = f$$ hence $e(f) = f^{-1} = f$. Thus the complete digraph is a 
generalized group.
 \end{exam}
 
 \begin{exam}
 $G = \Bigg\{ A = $\(\begin{pmatrix}
a&b&\\
0&0&\\
\end{pmatrix}\) $ : a \neq 0,\;\; a,\; b\; \in R\Bigg\} $ \\
Then, $G$ is a generalized group and for all $A \in G$,\\\\
$e(A) =$ \(\begin{pmatrix}
1&b/a&\\
0&0&\\
\end{pmatrix}\) and $A^{-1} = $ \(\begin{pmatrix}
1/a&b/a^2&\\
0&0&\\
\end{pmatrix}\)\\\\
where $e(A)$ and $A^{-1}$ are the identity and the inverse of matrix $A$ respectively.
Also $e(AB) = e(B)$ for all $A, B \in G $
\end{exam}
 
\begin{defn}(cf.\cite{molai})
 Let $G$ and $H$ be two generalized groups. A generalized group homomorphism 
from $G$ to $ H $  is a map 
 $f: G\rightarrow H$ such that $$ f(ab) = f(a)f(b)$$
\end{defn}
				
\begin{thm}(cf.\cite{molai})
 Let $f: G\rightarrow H$ be a homomorphism of generalized groups 
$G$ and $H.$ Then
 \begin{enumerate}
  \item $f(e(a))= e(f(a))$ is an identity element in $H$ for all $a\in G $  
  \item $ f(a^{-1}) = f(a)^{-1}$
  \item if $K$ is a generalized subgroup of $G$ then $f(K)$ is a generalized 
subgroup of $H.$
   \end{enumerate}
\end{thm}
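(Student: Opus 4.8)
The plan is to lean on the two uniqueness assertions built into the definition of a generalized group — the uniqueness of the identity $e(a)$ attached to each element $a$, and the uniqueness of the inverse $a^{-1}$ — and to push everything through using only that $f$ is multiplicative.

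For (1), I would apply $f$ to the relations $a\,e(a) = e(a)\,a = a$ to obtain $f(a)\,f(e(a)) = f(e(a))\,f(a) = f(a)$ in $H$. This says that $f(e(a))$ acts as an identity for the specific element $f(a)$, and since the identity attached to a given element of a generalized group is unique, it follows that $f(e(a)) = e(f(a))$; this element is by construction an identity in $H$. For (2), I would apply $f$ to $a\,a^{-1} = a^{-1}\,a = e(a)$ to get $f(a)\,f(a^{-1}) = f(a^{-1})\,f(a) = f(e(a))$, and then rewrite the right-hand side as $e(f(a))$ using part (1). Now $f(a^{-1})$ has exactly the defining property of the inverse of $f(a)$, so uniqueness of inverses in $H$ forces $f(a^{-1}) = f(a)^{-1}$.

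For (3), I would take $x, y \in f(K)$, write $x = f(a)$ and $y = f(b)$ with $a, b \in K$, and compute, using (2) and multiplicativity, $x y^{-1} = f(a)\,f(b)^{-1} = f(a)\,f(b^{-1}) = f(a b^{-1})$. Since $K$ is a generalized subgroup, $a b^{-1} \in K$, hence $x y^{-1} \in f(K)$; together with $f(K) \neq \emptyset$ (inherited from $K \neq \emptyset$), this is exactly the subgroup criterion recalled above. I do not anticipate a genuine obstacle — the whole argument is a routine transport of structure — but the point to stay careful about is conceptual rather than computational: throughout one must read $e(\cdot)$ as the identity of the element displayed in its argument, never as a single global identity of the generalized group, since it is precisely that reading which makes the appeals to uniqueness in (1) and (2) legitimate.
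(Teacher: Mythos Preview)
Your argument is correct and is exactly the standard proof: push the defining relations $a\,e(a)=e(a)\,a=a$ and $a\,a^{-1}=a^{-1}a=e(a)$ through $f$ and invoke the uniqueness of $e(\cdot)$ and of inverses in $H$, then use part~(2) together with the subgroup criterion $ab^{-1}\in K$ for part~(3). The paper itself does not supply a proof of this theorem --- it is quoted from Molaei with a citation --- so there is nothing to compare against beyond noting that your proof is the expected one.
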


Generalized groups and their homomorphisms form a category and is denoted by 
$GG$

 \begin{defn}(cf.\cite{Gursoy})
  A generalized group groupoid $\mathcal G $ is a groupoid $(\nu\mathcal{G}, \mathcal{G})$ endowed 
  with the structure of generalized group such that the following  maps 
  \begin{enumerate}
  \item $+ : G\times G \rightarrow G, \;\; (f, g) \rightarrow f + g,$
 
  \item $u : G\rightarrow G,\;\; f \rightarrow -f,$ 
  \item $e:G\rightarrow G,\;\; f \rightarrow e(f),$
  \end{enumerate}
  are functorial.
  \end{defn}
  
  Since $+$ is a functorial we have 
  \begin{equation*}
   \begin{split}
   ( f\circ g) + (h\circ k) & = +(f\circ g, h\circ k)\\
      & = +[(f, h)\circ(g, k)] \\
      & = +(f, h) \circ +(g, k)\\
      & = (f+h) \circ (g+k)\\
      ( f\circ g) + (h\circ k) & = (f+h) \circ (g+k)
   \end{split}
  \end{equation*}

  thus the interchange law 
  $$(f\circ g)+(h\circ k) = (f+h) \circ (g+k)$$
  exists between groupoid composition and generalized group operation.
  
   In other words, a generalized group groupoid is a groupoid endowed with a 
structure of generalized group such that the structure maps of groupoid are generalized group homomorphisms.
  
 \begin{exam}
 Let $G$ be a generalized group. Then $G\times G$ is a generalized group 
groupoid with object set $G$.
 For each morphism $(x, y)\in G\times G$ the identity arrow of $(x,y)$ is $(e(x),e(y))$ and 
the inverse is $(-x, -y)$ and the
 interchange law also holds. For,   
 \begin{equation*}
  \begin{split}
   [(f,g)\circ(g, h)]+[(f', g')\circ(g', h')]& = (f, h)+(f', h')\\
                                    & = (f+f',h+h')\\
  [(f,g)+ (f',g')] \circ [(g, h)+(g', h')] & = (f+f', g+g')\circ (g+g', h+h')\\
  & = (f+f', h+h')\\
  [(f,g)\circ(g, h)]+[(f', g')\circ(g', h')]& = [(f,g)+ (f',g')] \circ 
[(g,h)+(g', h')]
  \end{split}
 \end{equation*}
\end{exam}

\subsection{Generalized Groups from a connected groupoid.\\}
 
We proceed to describe the generalized group obtained from a  
connected groupoid. Let $\mathcal G$ be a connected groupoid with $\mathcal G(a, 
b)$ 
contains exactly one morphism for all $a,\; b\;\in \mathcal G .$ 
Define a binary operation which is deoted by $+$ on $\mathcal G.$ For, 
let $f \in \mathcal G(a, b)$ and $g \in \mathcal G(c,d)$ 
 $$f+g = f\circ k_{bc}\circ g $$ where $\circ$ is the composition in the 
groupoid 
and 
$k_{bc}$ is the unique 
 morphism in $\mathcal G(b,c)$. Then $\mathcal G$ is a generalized group, since

\begin{enumerate}
\item The operation $"+"$ is associative.\\ for $f\in \mathcal G(a,b)\; g\in 
\mathcal G(c,d)\; h\in \mathcal G(i,j)$
\begin{equation*}
 \begin{split}
  (f+g)+h & = (f\circ k_{bc}\circ g)+h \\ 
  & = (f\circ k_{bc}\circ g)\circ( k_{di}\circ h ) \\ 
   & = f k_{bc}gk_{di} h\\
    f+(g+h) & = f+( g\circ k_{di}\circ h)\\
    & = (f\circ (k_{bc})\circ (g\circ k_{di}\circ h)\\
    & = f k_{bc}gk_{di} h
 \end{split}
\end{equation*}

$ie;$ $(f+g)+h = f+(g+h)$ and $+$ is an associative binary opration.
\item for each $f \in \mathcal G$
$$ f+f = f\circ f^{-1}\circ f = f$$
hence $e(f) =  -f = f$\\
Thus the groupoid $\mathcal G$ together with the operation $+,$ forms a 
generalized group and is denoted by $\mathcal G^*.$ More over this generalized group $\mathcal G^*$ is normal, since $ e(f+g) = f+g 
= e(f)+e(g)$ 
\end{enumerate}

Now we extend this construction of a generalized group in to an arbitrary 
connected groupoid. We start with
a connected groupid $\mathcal G$ with partial composition $\circ.$ Choose a 
morphism from each homset $\mathcal G(a,b)$
and is denote by $h_{ab}$ in such a way that $h_{ba} = {h_{ab}}^{-1}$ and $h_a 
= 1_a.$ Define an addition $+$ on $\mathcal G$ 
for $f\in \mathcal G(a, b), g\in \mathcal G(c,d)$ as follows
$$ f+g = f\circ h_{bc} \circ g .$$
 Clerarly $+$ is an associative binary operation on $\mathcal G$ and for each 
 $f\in \mathcal G(a,b)$ 
 
\begin{center}
\begin{equation*}
\begin{split}
f+h_{ab} & = f\circ h_{ba} \circ h_{ab}\\
& = f\circ{ h_{ab}}^{-1}\circ h_{ab}\\
& = f\circ 1_b\\
& = f\\
h_{ab} +f & = h_{ab} \circ h_{ba}\circ f\\
         & =  h_{ab} \circ {h_{ab}}^{-1}\circ f\\
         & = 1_a \circ f\\
         & = f\\
\end{split}
\end{equation*}
\end{center}

$f+h_{ab}  = h_{ab}+f  = f\;\; and \;\;e(f) = h_{ab} \;\; \forall\;\; 
f\in \mathcal G $

\begin{center}
\begin{equation*}
\begin{split}
f +(h_{ab}f^{-1} h_{ab})& = f \circ 1_b \circ f^{-1}\circ h_{ab}\\
          & = h_{ab}\\
          (h_{ab}f^{-1} h_{ab})+f & = h_{ab}\circ f^{-1}\circ 1_a \circ f\\
          & = h_{ab}
\end{split}
\end{equation*}
\end{center}
$$f +(h_{ab}f^{-1} h_{ab}) =  (h_{ab}f^{-1} h_{ab})+f = h_{ab} = e(f)$$
$$ -f = h_{ab}f^{-1} h_{ab} $$
 Thus the connected groupoid $\mathcal G$ is a generalized group with respect to 
 the addition we defined above. \\
 
Next we recall the generalized rings which was introduced by Molaei. 

\begin{defn}(cf.\cite{mrmolaei}) A generalized ring $R$ is a nonempty set $ R$ with 
two different operations 
adition and multiplication
 denoted by $`+`\; and\; `\times`$respectively in which $(R,+)$ is a 
generalized group and satisfies the  following conditions.
 \begin{enumerate}
  \item multiplication is an associative binary operation.
  \item for all $x,\; y,\; z\;\in R \quad x(y+z) = xy+xz$ and $(x+y)z = xz+yz$
 \end{enumerate}
 \end{defn}
 Note that in a generalized ring $R,$ $e(ab) =e(a)e(b)$ for all $a, b\;\in R.$
 \begin{exam}
 $\mathbb{R^2}$ with the operations $(a, b) +(c, d) = (a,d)$ and $ (a, b)(c, d)=(ac, 
bd)$ 
is a generalized ring.
 \end{exam}
   
\section{Generalized modules}
Let $M$ be a generalized group and $R$ be a ring, in the following we proceed to define a 
generalized module using the generalized group $M$. 
\begin{defn}
Let $R$ be a ring with unity. A generalized group $M$ is said to be (left) 
generalized $R$
module if for each element
 $r$ in $R$ and each $m$ in $M$ we have a product $rm$ in $M$ such that  for 
$r, s \in\; R \;\; and\;\; m, n \in M $
 
 \begin{enumerate}
  \item (r+s)m \;=\; rm+sm
  \item r(m+n)\; =\; rm+ rn
  \item r(sm)\; =\; (rs)m
  \item  $r e(m) = e(m)$ \;\;for all $r\in R$ and $m\in M$
  \item 1.m = m
 \end{enumerate}

 \end{defn} 
 \begin{prop}
  Let $M$ be an $R-$ module then $M \times M$ is a generalized $R$ module with  
the operations  
   $$(x, y)+(m, n) = (x, y+n)  $$ 
  $$r(x, y) = (x, ry)$$ and for each $x \in M$ the subset $M_x = \{ (x, y): y\in 
M\}$ is an $R$ module.
  \end{prop}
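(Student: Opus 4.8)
The plan is to verify the three assertions in order: that $(M\times M,+)$ is a generalized group, that the action $r(x,y)=(x,ry)$ satisfies the five axioms of a (left) generalized $R$-module, and finally that each slice $M_x$ is an ordinary $R$-module. Throughout I will use only that $M$ is an $R$-module in the usual sense, so that every identity needed in the second coordinate is available for free.

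First I would check that $+$ is associative: both $\big((x,y)+(m,n)\big)+(p,q)$ and $(x,y)+\big((m,n)+(p,q)\big)$ collapse to $(x,y+n+q)$, so associativity follows from associativity of $+$ in $M$. Next I would exhibit the local identity $e(x,y)=(x,0_M)$ and verify $(x,y)+(x,0)=(x,0)+(x,y)=(x,y)$; for uniqueness, note that $(x,y)+(a,b)=(x,y+b)$ forces $b=0_M$, while $(a,b)+(x,y)=(a,b+y)$ forces $a=x$ and $b=0_M$, so $(x,0)$ is the only element with this property. The inverse is $-(x,y)=(x,-y)$, since $(x,y)+(x,-y)=(x,-y)+(x,y)=(x,0)=e(x,y)$, and its uniqueness is guaranteed by the remark following the definition of a generalized group. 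Hence $(M\times M,+)$ is a generalized group.

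Next I would run through the five module axioms, each of which reduces to the corresponding module identity in the second coordinate. Axioms (1),(3),(5) are immediate: $(r+s)(x,y)=(x,(r+s)y)=(x,ry+sy)=r(x,y)+s(x,y)$, and similarly for $(rs)(x,y)=r(s(x,y))$ and $1\cdot(x,y)=(x,y)$. Axiom (2) is the only one that uses the particular shape of $+$ on $M\times M$: $r\big((x,y)+(m,n)\big)=r(x,y+n)=(x,r(y+n))=(x,ry+rn)=(x,ry)+(m,rn)=r(x,y)+r(m,n)$. Axiom (4) is immediate, since $r\,e(x,y)=r(x,0_M)=(x,r\cdot 0_M)=(x,0_M)=e(x,y)$. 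This shows $M\times M$ is a generalized $R$-module.

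Finally, for fixed $x\in M$ the subset $M_x$ is closed under $+$ and under the $R$-action, addition on it is associative and commutative, the single element $(x,0_M)$ is a two-sided identity for all of $M_x$, and $(x,-y)$ is the inverse of $(x,y)$; thus $(M_x,+)$ is an abelian group and the five module axioms restrict. The cleanest way to conclude is to observe that $\varphi_x\colon M\to M_x$, $y\mapsto(x,y)$, is a bijection with $\varphi_x(y+n)=\varphi_x(y)+\varphi_x(n)$ and $\varphi_x(ry)=r\varphi_x(y)$, hence an $R$-module isomorphism. The computations are all routine; the only points needing care are the uniqueness of the local identity $e(x,y)$, where one must compare both coordinates separately, and the observation that on $M_x$ this local identity is actually global, which is precisely what upgrades the generalized module to a genuine $R$-module.
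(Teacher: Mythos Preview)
Your proof is correct and follows essentially the same route as the paper: you verify directly that $(M\times M,+)$ is a generalized group with $e(x,y)=(x,0)$ and $-(x,y)=(x,-y)$, check the five generalized module axioms coordinatewise, and then observe that on each slice $M_x$ the identity becomes global so that $M_x$ is an ordinary $R$-module. Your treatment is in fact a bit more careful than the paper's (you explicitly verify associativity and the uniqueness of $e(x,y)$, and you add the clean isomorphism $\varphi_x\colon M\to M_x$, $y\mapsto(x,y)$, which the paper omits), but the underlying argument is the same.
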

  \begin{proof}
   It can be seen that
  $(M, +)$ is a generalized group in which for all $(m, n) \in M,\;\; e(m, n)= 
(m, 0)$ and $(m, n)^{-1} = (m, -n).$ To show that $M$ is a generalized $R$ 
module consider $r, s \in R$ and $(x,y),(m,n) \in M\times M,$
  
\begin{enumerate}
\item \begin{equation*}
\begin{split}
(r+s)(x,y) & = (x,(r+s)y)\\
& = (x, ry+rs)
\end{split}
\end{equation*}
 \begin{equation*}
\begin{split}
r(x,y) +s(x,y) & = (x, ry)+(x,rs) \\
& =(x, ry+rs)
\end{split}
\end{equation*}
$ie;\;\;\;\;\;(r+s)(x,y)= r(x,y) +s(x,y)$ axiom(1)is satisfied

\item \begin{equation*} 
\begin{split}
r[(x,y)+(m,n)] & = r[(x,y+n)] \\
     & = (x, r(y+n))\\
     & = (x, ry+rn)    
\end{split}
\end{equation*}
\begin{equation*} 
\begin{split}
r(x,y) +r(m,n) & = (x, ry)+(m, rn)\\
     & = (x, ry+rn)
\end{split}
\end{equation*}

hence $r[(x,y)+(m,n)]= r(x,y) +r(m,n)$ axiom(2)is satisfied
\item \begin{equation*} 
\begin{split}
rs(x,y)  & = (x, rsy)\\
     & = (x, r(sy))\\
     & = r(x,sy)\\
     & = r(s(x,y))
\end{split}
\end{equation*}
$rs(x,y) = r(s(x,y) )$
axiom(3)is satisfied
\item \begin{equation*} 
\begin{split}
 re(x,y) & = r(x,0) \\ & = (x,r0)\\ & = (x,0)\\ & = e(x, y)
\end{split}
\end{equation*} 
$ re(x,y) = e(x,y)$ axiom(4)is satisfied

\item $$1\cdot(x, y)  = (x,y)\quad \forall (x,y)\in M$$

\end{enumerate}
To show that for each $x \in M$ the subset $M_x = \{ (x, y): y\in 
M\}$ is an $R$ module it is enough to show that $M_x$ is an abelian group
and the scalar multipication is closed. 
Let $m+n =(x, y)$ and $n= (x, z)$ be elements in $M_x$ then $$ m+n = (x,y)+(x,z) = (x, y+z)$$ and
$$ n+m = (x,z)+(x,y) = (x, z+y) = (x,y+z) $$
Therefore $$m+n = n+m$$ and $+$ is a commutative binary operation on $M_x.$  Forevery $m\in M_x$ 
$(x,y)+(x,0)= (x,0) +(x,y) = (x,y)$ and $(x,y)^{-1}= (x,-y).$ Thus $M_x$ is an abelian subgroup of$M$
and for any $r\in R\;\;\; r(x, y) = (x,ry) \in M_x.$ Hence $M_x$ is a $R$ module.
\end{proof}
Thus we have the following example for a generalized $\mathbb R$ module.

 \begin{exam}
   Consider $M = \mathbb R \times \mathbb R$ with the following operations 
  
  $$(x, y)+(m, n) = (x, y+n)  $$ 
  $$r(x, y) = (x, ry)$$
  is a generalized $R$ module.

 \end{exam}
 \begin{thm}
 If $M$ is a generalized $R$ module then 
 \begin{enumerate}
 \item$ e(rm) = re(m)$ for all $r \in R,\; 
m\in M$ 
\item $(rm)^{-1} = r(m^{-1})$
\end{enumerate}
\end{thm}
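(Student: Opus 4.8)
The plan is to derive both parts directly from the uniqueness of identities and of inverses in a generalized group (recorded just after the definition of a generalized group) together with the module axioms. Throughout I write $+$ for the generalized group operation on $M$, so that for $m\in M$ the element $e(m)$ and the inverse $m^{-1}$ are characterised by $m + e(m) = e(m) + m = m$ and $m + m^{-1} = m^{-1} + m = e(m)$.

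For part (1), fix $r \in R$ and $m \in M$. I would check that $r\,e(m)$ satisfies the defining property of the identity of $rm$. By module axiom (2) and the defining property of $e(m)$,
\begin{equation*}
rm + r\,e(m) = r\bigl(m + e(m)\bigr) = rm, \qquad r\,e(m) + rm = r\bigl(e(m) + m\bigr) = rm.
\end{equation*}
Since the identity attached to an element of a generalized group is unique, this forces $e(rm) = r\,e(m)$. (Combining with module axiom (4) one even gets $e(rm) = e(m)$, which matches the behaviour in the example $M = \mathbb{R}\times\mathbb{R}$.)

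For part (2), I would argue in the same spirit that $r(m^{-1})$ is the inverse of $rm$. Using module axiom (2), the defining property of $m^{-1}$, and part (1),
\begin{equation*}
rm + r(m^{-1}) = r\bigl(m + m^{-1}\bigr) = r\,e(m) = e(rm), \qquad r(m^{-1}) + rm = r\bigl(m^{-1} + m\bigr) = r\,e(m) = e(rm).
\end{equation*}
As inverses in a generalized group are unique, this yields $(rm)^{-1} = r(m^{-1})$.

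Neither part presents a real obstacle: each reduces to a one-line computation. The only point requiring care is the order of dependence — part (2) invokes the conclusion of part (1) (and, if one prefers, module axiom (4)) — together with the structural observation that makes everything go through, namely that uniqueness of identities and of inverses in a generalized group is precisely what upgrades ``$r\,e(m)$ (resp. $r(m^{-1})$) has the required property'' to ``$r\,e(m)$ (resp. $r(m^{-1})$) \emph{is} the identity (resp. inverse) of $rm$''.
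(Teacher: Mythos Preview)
Your argument is correct and is essentially the paper's own proof: the same two-sided computations $rm + r\,e(m) = r(m+e(m)) = rm$ and $rm + r(m^{-1}) = r(m+m^{-1}) = r\,e(m)$ (and their mirror images) appear verbatim there. You are in fact slightly more explicit than the paper in invoking uniqueness of identities and inverses and in using part~(1) to rewrite $r\,e(m)$ as $e(rm)$ in part~(2), which only makes the reasoning cleaner.
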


 \begin{proof}
 
  Let $r \in R$ and $m \in M$,
  \begin{enumerate}
   \item 
    $$ rm + re(m) = r(m+e(m)) = rm$$
  
  $$ re(m) + rm = r(e(m)+m) = rm$$
    Hence $e(rm) = re(m)$
  
  \item 
  $$rm + rm^{-1} = r(m+m^{-1}) = re(m)$$
  $$ rm^{-1} + rm = r (m^{-1} +m ) = re(m)$$
  
  \end{enumerate}
 \end{proof}

\begin{defn}
 Let $M$ and $N$ be two generalized $R-$modules. A function $f: M \rightarrow N 
$ is called generalized module 
 homomorphism if $$ f(m+ n)\; = f(m) +f(n)\;\; for\;all\; m,\; n\in M $$
 $$ f(rm) = rf(m)\;\; for\;all\; r\in R,\; m \in M $$
\end{defn}

\begin{defn}
 Let $M$ be a generalized $R$ module and $N\subset M $ is said to be 
generalized submodule of $M$ if  $N$ is a generalized subgroup of $G$ and for 
each $r\in R 
\; and \; m \in N$
 $rm \in N $ 
 \end{defn}
 
 \begin{prop}
Let $M$  be a generalized $R$-module  which  is also a normal generalized group. 
Then the set of identity elements in $M$ is a submodule of 
$M$ and we call it the zero submodule or trivial submodule.
\end{prop}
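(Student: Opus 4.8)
The plan is to set $E=\{\,e(m)\mid m\in M\,\}$, the set of identity elements of $M$, and to verify the two clauses in the definition of a generalized submodule: that $E$ is a generalized subgroup of $M$, and that $rm\in E$ whenever $r\in R$ and $m\in E$. Note first that $E\neq\emptyset$ since $M\neq\emptyset$, so the generalized subgroup criterion (closure under $(a,b)\mapsto ab^{-1}$) is applicable.

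For the subgroup part I would first record the one preliminary fact needed, namely that each identity is idempotent and self-inverse. Associativity gives $m\cdot\big(e(m)e(m)\big)=\big(m\,e(m)\big)e(m)=m$ and $\big(e(m)e(m)\big)\cdot m=e(m)\big(e(m)\,m\big)=m$, so $e(m)e(m)$ is an identity for $m$, whence $e(m)e(m)=e(m)$ by uniqueness of identities; consequently $e(e(m))=e(m)$ and $e(m)^{-1}=e(m)$. Now take $a=e(x)$, $b=e(y)\in E$; then $ab^{-1}=e(x)e(y)$, and since $M$ is \emph{normal} we get $e\big(e(x)e(y)\big)=e(e(x))\,e(e(y))=e(x)e(y)$, so $e(x)e(y)$ is its own identity and therefore lies in $E$. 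Hence $E$ is a generalized subgroup of $M$. (Equivalently: normality says the map $e\colon M\to M$ is a generalized group homomorphism, so $E=e(M)$ is a generalized subgroup by the homomorphism theorem recalled earlier.)

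Closure of $E$ under the $R$-action is then immediate from module axiom $(4)$: for $r\in R$ and $m=e(x)\in E$ we have $rm=r\,e(x)=e(x)\in E$. Combining this with the previous paragraph, $E$ satisfies both conditions and is a generalized submodule of $M$. Finally I would remark that inside $E$ every element is its own identity, its own inverse, and is fixed by every scalar, so the structure induced on $E$ is trivial; this is precisely why we call $E$ the ``zero submodule'' (for instance, in the model $M=M'\times M'$ of the earlier proposition one obtains $E=M'\times\{0\}$).

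The only step that uses a hypothesis beyond the bare module axioms is the verification $ab^{-1}=e(x)e(y)\in E$, where normality of $M$ is exactly what guarantees that a product of two identities is again an identity; this is the point I would expect to be the crux. Everything else is a direct substitution into the defining axioms of a generalized group and of a generalized module.
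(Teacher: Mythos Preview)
Your proof is correct and follows essentially the same approach as the paper: show $E=e(M)$ is a generalized subgroup via the $ab^{-1}$ criterion (using $e(y)^{-1}=e(y)$ and normality) and then invoke axiom~(4) for closure under scalars. Your version is in fact more careful---you actually justify $e(e(m))=e(m)$ and $e(m)^{-1}=e(m)$, and you conclude $e(x)e(y)\in E$ by showing it equals its own identity, whereas the paper simply writes $e(x)e(y)=e(xy)$; both routes are equivalent uses of normality.
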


\begin{proof}
We denote the set of identity elements in $M$ by $$e(M)=\{e(x)\;:\; x\in M\}$$
First we show that $e(M)$ is a generalized subgroup of $M.$ For $m,\; n\;\in 
e(M)$ there exists $x,\; y\;\;\in M$
such that $m = e(x)$ and $n = e(y).$ $$ mn^{-1} = e(x)e(y)^{-1} = e(x)e(y) = 
e(xy)$$ hence 
 $xy^{-1} \in e(M)$ and $e(M)$ is a generalized subgroup of $M.$ \\
 Ler $r$ be an element in the ring $R$ and $$rm = re(x) = e(x)$$
hence $rm \in M$ for all $r\in R$ and $m \in M.$ Thus set of identitt elements of $M$ 
form a generalized submodule of $M.$ 
\end{proof}

Every nonzero generalized module $M$ contains at least one submodule $M$ 
itself. 

\begin{thm} Let $R$ is a ring and $M$ and $N$ are normal generalized $R$ modules. 
If $f: M\rightarrow N$ is a generalized $R-$ module homomorphism, 
then   $$ ker f = \{x: x \in M,\; f(x)\in e(N) \}$$ where $e(N)$ 
denotes the set of identities elements of $N,$
is a submodule of $M$ and $Im f = \{f(x):\;\; x\in M \}$ is a submodule of $N.$ 
\end{thm}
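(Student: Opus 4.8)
The plan is to check, for each of the two sets, the two defining conditions of a generalized submodule from the definition above: that the set is a generalized subgroup (closed under the operation $a,b \mapsto ab^{-1}$) and that it is closed under scalar multiplication by elements of $R$. Along the way I would also record that each set is non-empty. The only facts needed beyond the definitions are that $f$ is a generalized group homomorphism together with parts (1) and (2) of the homomorphism theorem ($f(e(a))=e(f(a))$ and $f(a^{-1})=f(a)^{-1}$), axiom (4) of a generalized module ($r\,e(m)=e(m)$), and the elementary fact that in a generalized group every identity element is idempotent and equals its own inverse, so $e(z)^{-1}=e(z)$.

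First I would treat $\ker f$. Take $a,b\in\ker f$, so $f(a),f(b)\in e(N)$; write $f(a)=e(x)$ and $f(b)=e(y)$ with $x,y\in N$. Then
\[
f(ab^{-1}) = f(a)f(b)^{-1} = e(x)\,e(y)^{-1} = e(x)e(y),
\]
and here the normality of $N$ enters: $e(x)e(y)=e(xy)\in e(N)$, so $ab^{-1}\in\ker f$. For scalar closure, let $a\in\ker f$ and $r\in R$; then $f(ra)=r f(a)=r\,e(x)=e(x)\in e(N)$ by axiom (4) applied in $N$, so $ra\in\ker f$. Finally $\ker f\neq\emptyset$ since for every $m\in M$ we have $f(e(m))=e(f(m))\in e(N)$; in fact this shows $e(M)\subseteq\ker f$. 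Hence $\ker f$ is a generalized submodule of $M$.

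Next I would treat $\mathrm{Im}\,f$. Given $p,q\in\mathrm{Im}\,f$, write $p=f(a)$, $q=f(b)$ with $a,b\in M$. Then
\[
pq^{-1} = f(a)f(b)^{-1} = f(a)f(b^{-1}) = f(ab^{-1}),
\]
and since $M$ is a generalized group $ab^{-1}\in M$, so $pq^{-1}\in\mathrm{Im}\,f$. For $r\in R$, $rp = r f(a) = f(ra)\in\mathrm{Im}\,f$ because $ra\in M$. As $\mathrm{Im}\,f$ plainly contains $f(m)$ for any $m\in M$, it is non-empty, and therefore a generalized submodule of $N$.

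No serious obstacle arises; the computations are routine once the right facts are collected. The one place that genuinely needs care is the closure of $\ker f$ under $a,b\mapsto ab^{-1}$: one must rewrite $f(a)f(b)^{-1}$ as a single identity element of $N$, and this is exactly the step that consumes the normality hypothesis $e(x)e(y)=e(xy)$ on $N$ (note that normality of $M$ is not actually used) together with $e(y)^{-1}=e(y)$. Everything else is a direct application of the homomorphism properties and the module axioms.
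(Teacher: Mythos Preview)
Your proposal is correct and follows essentially the same route as the paper: verify the generalized-subgroup criterion $ab^{-1}\in\text{set}$ and closure under scalars for each of $\ker f$ and $\mathrm{Im}\,f$, using $f(a^{-1})=f(a)^{-1}$, $e(y)^{-1}=e(y)$, normality of $N$ for the kernel step, and axiom~(4) for scalar closure. Your added remarks on non-emptiness and on the fact that normality of $M$ is not used are accurate refinements not made explicit in the paper's version.
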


\begin{proof}
 Let $M$ and $N$ be a generalized modules over a ring $R$ and $f$ is a 
  homomorphism between $M$ and $N.$ To show that  $ ker f $ 
is a submodule of $M$ it is suffices to prove that it is a generalized 
subgroup of $M$ and is closed under scalar multipication.\\ Let $ x, y \in ker f $ then  $ f(x) = e(n) \; and\; f(y)= e(n')$ for some $n, \; n'\in N$
Now consider;
\begin{equation*}
 \begin{split} 
f(xy^{-1}) & = f(x)f(y^{-1})\\ & = f(x)(f(y))^{-1}\\ & = e(n)e(n')^{-1} \\ 
& = e(n)e(n')\\ & = e(nn')
\end{split}
\end{equation*}
 hence $xy^{-1} \in ker f$ and $ ker f$ is a generalized subgroup of $M.$ \\
for any $r\in R$ 
\begin{equation*}
 \begin{split}
  f(rx) & = rf(x)\\ & = re(n)\\ &= e(n)\\
  rx \in ker f \; for\; each\;\;r\in R.
 \end{split}
\end{equation*}

Therefore $ker f$ is a submodule of $M$ for every $m\in M.$ \\
Similarly we can prove that $Im f$ is a submodule of $N.$ For;
let $x, y \in Im f$ then there exists $m, n, \in M$ such that
$f(m) = x$ and $f(n) = y.$ Now consider,
\begin{equation*}
 \begin{split}
  xy^{-1} & = f(m)f(n)^{-1}\\
          & = f(m) f(n^{-1}) \\
          & = f(mn^{-1})\\
          and\;\; mn^{-1} \in M \;\; hence \;\; xy^{-1}\in Im f.
 \end{split}
\end{equation*}
Hence $Im f$ is a generalized subgroup of $N$ and for any $r \in R,$\\
\begin{equation*}
 \begin{split}
  rx & = rf(m)\\ & = f(rm),\;\; rm\in M
 \end{split}
\end{equation*}
Hence $rx \in Im f$ for any $r \in R$ and $x \in M.$ Therefore
$Im f$ is a generalized submodule of $N.$
\end{proof}

\begin{thm}
If $M$ is a generalized $R$-module and if there exists $x \in M$ such that $M = 
Rx$ then $M$ is a module over $R$ 
 \end{thm}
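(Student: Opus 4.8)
The claim is that if $M$ is a generalized $R$-module for which there exists a single element $x \in M$ with $M = Rx$, then $M$ is an ordinary $R$-module; equivalently, $(M,+)$ is an abelian group (with a genuine two-sided zero) and the module axioms hold globally. Since the generalized-module axioms already give associativity, distributivity on both sides, $1\cdot m = m$, and $r\,e(m) = e(m)$, the only things left to establish are that $(M,+)$ has a \emph{unique} identity element and that $+$ is \emph{commutative}; uniqueness of identity together with axiom (4) of the generalized-module definition will then force the identity to behave like the zero of a module.

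The plan is to exploit the hypothesis $M = Rx$ to pin down the identity structure. First I would write an arbitrary element of $M$ as $rx$ for some $r \in R$, and compute $e(rx)$. By the first part of the preceding Theorem, $e(rx) = r\,e(x)$. Next, I would show $e(x)$ is a two-sided identity for \emph{all} of $M$: for any $rx \in M$, using $1\cdot x = x$ and the distributive/associative axioms, $rx + e(x) = rx + 1\cdot e(x)$, and I want to massage this into $r(x + e(x)) $ or similar — more carefully, I expect to use $e(x) = e(1\cdot x) = 1\cdot e(x)$ and the identity $re(m)=e(m)$ to get $rx + e(x) = rx + e(rx) = rx$ and $e(x) + rx = e(rx) + rx = rx$. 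Hence $e(x)$ is a global two-sided identity, and since identities in a generalized group are unique to each element but here one element serves all, $e(rx) = e(x)$ for every $r$, so $M$ has a unique identity; call it $0$. This makes $(M,+)$ a monoid with two-sided zero $0$, and since every element has an inverse (axiom (3) of generalized group, now relative to the single identity $0$), $(M,+)$ is a group.

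The remaining step — and the one I expect to be the main obstacle — is commutativity of $+$. The cleanest route is to use $M = Rx$ twice: take $rx, sx \in M$ and try to show $rx + sx = sx + rx$. Expanding, $rx + sx$ is not obviously symmetric because the distributive law only lets me factor when the \emph{same} scalar or the \emph{same} module element appears. I would look for a relation of the form $rx + sx = (r+s)x = (s+r)x = sx + rx$, which works \emph{provided} addition in the ring $R$ is commutative (it is, $R$ being a ring) \emph{and} provided axiom (1), $(r+s)m = rm + sm$, can be applied — which it can, directly. So in fact $rx + sx = (r+s)x = (s+r)x = sx + rx$, giving commutativity for free once everything is written in terms of the generator $x$. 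Thus the generator hypothesis is exactly what collapses the generalized structure: every element is a scalar multiple of one fixed $x$, addition is governed by addition in $R$, and the lone identity $e(x)$ plays the role of $0$. Assembling these observations — unique two-sided identity, inverses, commutativity, and the inherited module axioms — shows $M$ is an $R$-module in the usual sense. I would present it in that order: (i) $e(x)$ is a global identity and identities are unique; (ii) $(M,+)$ is a group; (iii) $+$ is commutative via $(r+s)x = (s+r)x$; (iv) conclude.
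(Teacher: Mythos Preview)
Your proposal is correct and the core argument --- writing arbitrary elements as $rx$, $sx$ and using $(r+s)x = (s+r)x$ to get commutativity --- is exactly the paper's. The paper's proof is shorter because it establishes commutativity first and then invokes the fact (stated in the preliminaries) that every abelian generalized group is a group, which absorbs your steps (i) and (ii) in one line; your direct argument that $e(rx) = r\,e(x) = e(x)$ via axiom (4) is valid but unnecessary once commutativity is in hand.
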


\begin{proof}
  First we show that $M$ is an abelian group with respect to the generalized 
group operation. For any $m,\; n\; \in M$ there exists $r,\; s\; \in R$
  
  such that $$ m = rx\quad and \quad n = sx $$
  Consider $$ rx + sx = (r+s)x = (s+r)x = sx+rx $$ 
  $$ ie;\;\;\; m+n = n+m \;\;\; for\; all\; \; m, n \in M.$$ Hence $M$ is an 
abelian generalized group hence is an abelian group.\\
  Therefore $M$ is a $R-$ module.
 \end{proof}
\begin{note}
 The generalized modules and their homomorphisms form a category in which 
objects are
  the generalized modules and morphisms are their homomorphisms denoted by 
$\mathcal{GM}.$ 
\end{note}
\begin{res}
If $M$ and $N$ be two generalized modules over a ring $R$ then their cartesian product 
$M\times N$ defined by $$ M\times N = \{ (m,n):\;\;m\in M,\; \; n\in N\}$$ 
is a generalized module over $R$ with respect to the component wise operations.
$ie;$ for any $(m, n),\; (x,y)\;\in M\times N$ and $r\in R$ the addition and scalar multipication is given by 
$$(m,n)+(x,y) = (m+x,n+y)$$  $$ r(m,n) = (rm, rn)$$
\end{res}

\begin{defn}
A groupoid $\mathcal G$ is a generalized module groupoid over $R$ 
 if it has a generalized module structure over $R$ and it satisfies the  
following conditions.
\begin{enumerate}
\item $\mathcal G$ is a generalized group groupoid.
\item For each $r\in R$ the mapping $$\eta_r : \mathcal G\rightarrow \mathcal 
G$$
defined by $\eta_r(g) = rg$ is a functor on $\mathcal G$.
\\
$ie;$ for any composable morphisms in $\mathcal G$ and any $r\in R$ we should 
have
$$\quad r(g\circ h) = rg \circ rh.$$ 
\end{enumerate}
\end{defn}

\begin{exam}
Let $M$ be a generalized module over a ring $R.$ Then $M\times M$ is a generalized module  
groupoid over $R$ with object set $M.$ It follows from the example(6) that 
$M\times M$ with operation $(x,y)+(m,n) = (x+m,y+n)$
is a generalized group groupoid. The cartesian product of two generalized modules are
again a generalized module.
To show that $M\times M$ is a generalized module groupoid over $R$
it is enough to prove that for any $r\in R$ the map $\eta_r: M\times M 
\rightarrow M\times M$ defined by $\eta_r(x, y) = (rx, ry)$ is a functor. For,
let $x$ be any object in the category $M\times M$ then 
$$\eta_r(1_x) = \eta_r(x,x) = (rx, rx) = 1_{\eta_r(x)}. $$
Consider  two composable morphisms  $(x,y), (y, z)  $ in $M\times M$ then 
\begin{equation*}
\begin{split}
\eta_r[(x,y)\circ(y, z)] & = \eta_r[(x, z)]\\
 & = (rx, rz)\\
 \eta_r(x,y)\circ \eta_r(y, z)& = (rx, ry)\circ (ry, rz)\\
 & = (rx, rz)\\
 \eta_r[(x,y)\circ(y, z)]& = \eta_r(x,y)\circ \eta_r(y, z)
\end{split}
\end{equation*}
hence $M\times M$ is a generalized module groupoid over $R.$
\end{exam}

\begin{defn}
   Let $M$ and $N$ be two generalized module groupoids over a ring $R.$ A 
homomorphism $f: M\rightarrow N$
   of generalized module groupoids is a functor of underlying groupoids 
preserving generalized module structure.  
\end{defn}

Note That the generalized module groupoids and their homomorphisms form a category 
denoted by $\mathcal G\mathcal M\mathcal G $
\begin{prop}
 There is a functor from the category $\mathcal G\mathcal M$ of generalized 
modules to  the category $\mathcal G\mathcal M\mathcal G $ of generalized module groupoid
\end{prop}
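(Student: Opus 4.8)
The plan is to construct an explicit functor $\Phi : \mathcal{GM} \to \mathcal{GMG}$ and check functoriality, mimicking the construction in Example 6 (the $M\times M$ generalized module groupoid) which already does the heavy lifting at the object level. On objects, given a generalized $R$-module $M$, set $\Phi(M) = M\times M$, the generalized module groupoid over $R$ with object set $M$, morphism set $M\times M$, domain and codomain maps $(x,y)\mapsto x$ and $(x,y)\mapsto y$, composition $(x,y)\circ(y,z)=(x,z)$, identity arrow $1_x=(x,x)$, groupoid-inverse $(x,y)^{-1}$ taken in the groupoid sense, the generalized-group addition $(x,y)+(m,n)=(x+m,y+n)$, and scalar action $r(x,y)=(rx,ry)$. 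By Example 6 and the definition of generalized module groupoid, this is an object of $\mathcal{GMG}$; I would just cite that example rather than reprove it.

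On morphisms, given a generalized module homomorphism $f : M \to N$, define $\Phi(f) : M\times M \to N\times N$ by $\Phi(f)(x,y) = (f(x), f(y))$, with object part $x\mapsto f(x)$. First I would verify $\Phi(f)$ is a functor of the underlying groupoids: it sends $1_x=(x,x)$ to $(f(x),f(x))=1_{f(x)}$, and for composable $(x,y),(y,z)$ we get $\Phi(f)\big((x,y)\circ(y,z)\big)=\Phi(f)(x,z)=(f(x),f(z))=(f(x),f(y))\circ(f(y),f(z))=\Phi(f)(x,y)\circ\Phi(f)(y,z)$. Then I would check it preserves the generalized module structure: $\Phi(f)\big((x,y)+(m,n)\big)=\Phi(f)(x+m,y+n)=(f(x+m),f(y+n))=(f(x)+f(m),f(y)+f(n))=\Phi(f)(x,y)+\Phi(f)(m,n)$ using additivity of $f$, and $\Phi(f)\big(r(x,y)\big)=\Phi(f)(rx,ry)=(f(rx),f(ry))=(rf(x),rf(y))=r\,\Phi(f)(x,y)$ using $R$-linearity of $f$. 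Hence $\Phi(f)$ is a morphism in $\mathcal{GMG}$.

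Finally I would check the functor axioms for $\Phi$ itself: $\Phi(1_M)$ sends $(x,y)$ to $(1_M(x),1_M(y))=(x,y)$, so $\Phi(1_M)=1_{\Phi(M)}$; and for composable generalized module homomorphisms $f:M\to N$, $g:N\to P$, we have $\Phi(g\circ f)(x,y)=((g\circ f)(x),(g\circ f)(y))=(g(f(x)),g(f(y)))=\Phi(g)(f(x),f(y))=\Phi(g)\big(\Phi(f)(x,y)\big)$, so $\Phi(g\circ f)=\Phi(g)\circ\Phi(f)$. This completes the construction. The only genuinely delicate point — and the step I expect to need the most care — is confirming that the generalized-group-groupoid structure on $M\times M$ really satisfies all the functoriality/interchange conditions required of a generalized module groupoid (i.e. that $+$, the inverse map, the identity-section map $e$, and each $\eta_r$ are functorial and mutually compatible); but this is exactly what Example 6 establishes, so in the write-up I would lean on that example and keep the verification for $\Phi(f)$ and the functor axioms for $\Phi$, which are all routine componentwise computations.
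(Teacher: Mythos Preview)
Your proposal is correct and follows essentially the same route as the paper's proof: send $M$ to the pair groupoid $M\times M$ (appealing to the earlier example establishing that this is a generalized module groupoid), send a homomorphism $f$ to $f\times f$, and verify the functor axioms componentwise. If anything you are more thorough than the paper, which simply asserts that $F(f)$ is a $\mathcal{GMG}$-morphism, whereas you explicitly check that it is a groupoid functor and that it preserves the addition and the $R$-action.
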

\begin{proof}
 Let $M$ be a generalized module over a ring $R.$ Then it can be seen that 
cartesian product $M\times M$ is  a generalized module groupoid over $R.$ 
If $f: M_1 \rightarrow M_2$ is a homomorphism of 
 generalized modules then define $ F :\mathcal G\mathcal M \rightarrow  \mathcal 
G\mathcal M\mathcal G $ by
 $$F(M) = M\times M$$ and $$ F(f): M_1 \times M_1 \rightarrow M_2\times M_2$$ 
$$F(f)(m,n) = (f(m), f(n))$$ and 
 it can be seen that $F(f)$ is a functor between $M_1\times M_1$ and $M_2\times 
M_2$ so that $F(f)$ is a morphism  in the category of generalized module groupoid.
 Now we prove that $F$ is a functor between the category of generalized modules 
and the generalized module groupoids. For; let for any vertex $M\in\mathcal G \mathcal M$ we have,
$$F(1_M)(m,n) = (m,n)$$ $$ F(1_M) = 1_{F(M)}$$
 let $f : M\rightarrow N $ and $g: N\rightarrow P$ be two composable 
homomorphisms in $\mathcal{GM}$ then
 \begin{equation*}
  \begin{split}
   F(fg)(m,n) & = (fg(m), fg(n)) \\
              & = ((g(f(m)), g(f(n)))\\
   F(f)F(g)(m,n) & = F(g) (F(f)(m,n))  \\
   & = F(g)(f(m),f(n))\\
   &= (g(f(m)), g(f(n)))\\
    F(fg)(m,n) & =  F(f)F(g)(m,n) \;\; \forall (m,n) \in M\times M\\
   Hence\;\;\;   F(fg)= F(f)F(g)
     \end{split}
 \end{equation*} 
\end{proof}

\begin{prop}
 Let $\{ M_i\;\; i\in I\}$ be a family of generalized $R$ module groupoids.
 Then $M=(\nu M,M)$ where $\nu M = \prod \nu M_i $ and $M = \prod M_i$ is a generalized $R$ module groupoid.
\end{prop}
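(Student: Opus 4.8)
The plan is to verify that the product $M=(\nu M, M)$ with $\nu M=\prod_{i\in I}\nu M_i$ and $M=\prod_{i\in I} M_i$ carries each of the required structures by checking them componentwise, reducing everything to the corresponding facts already established for each factor $M_i$. The proof naturally splits into three stages matching the definition of a generalized module groupoid: first that $M$ is a groupoid, then that it is a generalized group groupoid, and finally that each scalar map $\eta_r$ is a functor.

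First I would describe the groupoid structure on $M$. An element of $M$ is a family $(g_i)_{i\in I}$ with $g_i\in M_i$; its domain and codomain are the families $(\mathrm{dom}\,g_i)$ and $(\mathrm{cod}\,g_i)$, which lie in $\nu M=\prod\nu M_i$. Two families $(g_i),(h_i)$ are composable precisely when each pair $g_i,h_i$ is composable in $M_i$, and then $(g_i)\circ(h_i):=(g_i\circ h_i)$. Associativity, the identity laws, and invertibility of every morphism all hold because they hold in each $M_i$; the identity at an object $(a_i)$ is $(1_{a_i})$ and the inverse of $(g_i)$ is $(g_i^{-1})$. So $M$ is a groupoid, and it is small since each $M_i$ is.

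Next I would equip $M$ with the generalized group operation $(f_i)+(g_i):=(f_i+g_i)$, with $e((g_i))=(e(g_i))$ and $-(g_i)=(-g_i)$. That $(M,+)$ is a generalized group is again componentwise: associativity of $+$, existence and uniqueness of $e(g)$, and existence of $-g$ with $g+(-g)=(-g)+g=e(g)$ all follow from the corresponding axioms in each $M_i$ (uniqueness of $e\big((g_i)\big)$ follows from uniqueness of each $e(g_i)$). The three structure maps $+$, $u$, $e$ on $M$ are functorial because their components are: for composable $(f_i),(g_i)$ one has, applying functoriality of $+$ in each $M_i$,
\begin{equation*}
\big((f_i)\circ(g_i)\big)+\big((h_i)\circ(k_i)\big)=\big((f_i\circ g_i)+(h_i\circ k_i)\big)=\big((f_i+h_i)\circ(g_i+k_i)\big)=\big((f_i)+(h_i)\big)\circ\big((g_i)+(k_i)\big),
\end{equation*}
and similarly $u$ and $e$ respect composition and identities. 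Hence $M$ is a generalized group groupoid. Finally, the ring acts by $r(g_i):=(rg_i)$, and the module axioms (1)--(5) of the definition of a generalized $R$-module hold componentwise. For fixed $r\in R$, the map $\eta_r\colon M\to M$, $\eta_r((g_i))=(rg_i)$, sends $1_{(a_i)}=(1_{a_i})$ to $(r\,1_{a_i})=(1_{ra_i})=1_{\eta_r((a_i))}$ and satisfies $\eta_r\big((g_i)\circ(h_i)\big)=(r(g_i\circ h_i))=(rg_i\circ rh_i)=\eta_r((g_i))\circ\eta_r((h_i))$, using that $\eta_r$ is a functor on each $M_i$; so $\eta_r$ is a functor on $M$. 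Therefore $M$ is a generalized $R$-module groupoid.

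There is no serious obstacle here; the only point requiring a little care is that the product is formed in the category of groupoids in the expected way — morphisms are families and composability is checked slot by slot — so that every axiom genuinely decouples over the index set $I$. Once that observation is made explicit, each verification is a one-line appeal to the same property in the factors, and no issue arises even when $I$ is infinite since all the axioms are stated with finitely many elements at a time.
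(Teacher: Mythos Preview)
Your proposal is correct and follows essentially the same componentwise verification as the paper: one first checks that $\prod M_i$ is a groupoid with slotwise composition and inverses, then that the componentwise operations $(f_i)+(g_i)=(f_i+g_i)$ and $r(f_i)=(rf_i)$ make it a generalized module and a generalized group groupoid, and finally that each $\eta_r$ is a functor. If anything, you are slightly more thorough than the paper, which asserts the generalized group groupoid structure without explicitly writing out the interchange law as you do.
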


\begin{proof}
 $\prod \nu M_i$ has elements $(m_i)_{i\in I}$ where $m_i\in \nu M_i$ and morphisms $(f_i)_{i\in I}$ from 
 $dom\, f_i$ to $cod\,f_i$ in $M_i$. The composition of morphisms is 
 $$(f_i)_{i\in I} \cdot (g_i)_{i\in I}=(f_i\cdot g_i)_{i\in I}$$
 whenever $f_i$ and  $g_i$ re composable morphisms in $M_i$. Since each $M_i$ is a groupoid each  morphism $f_i$ admits an inverse $f_i^{-1}$, thus the product 
 $M=(\prod \nu M_i, \prod M_i),\,i\in I$ is a groupoid. Moreover the product $M = \prod M_i$ has a structure of generalized module with respect to component wise operations 
 $$(f_i)_{i\in I} + (g_i)_{i\in I} = (f_i+g_i)_{i\in I}\;\; and \;\; r(f_i)_{i\in I} = (rf_i)_{i\in I}$$    
thus $M$  is a generalized group groupoid. 
It remains to show that the map $\eta_r: M \rightarrow M$ by 
$$ \eta_r(m_i)_{i\in I} = (rm_i)_{i\in I}$$
is a  functor on $ M.$ For each  $ (x_i)_{i\in I} \in \nu M $ consider
\begin{equation*}
\begin{split}
 \eta_r (1_{(m_i)})_{i\in I} & = (r1_{(m_i)})_{i\in I}\\
 & = (1_{(rm_i)})_{i\in I}\;\;\;\;( each\; M_i\; is\; a\; generalized\; module\; groupid)\\
 & = 1_{\eta_r(m_i)_{i\in I}}
\end{split}
\end{equation*}                                                 
let $(m_i)_{i\in I}, (n_i)_{i\in I}$ are two composable morpisms in 
$M,$ then 
\begin{equation*}
\begin{split}
  \eta_r((m_i)_{i\in I}\circ (n_i)_{i\in I}) & = r((m_i)_{i\in I}\circ (n_i)_{i\in I})\\
     & = r(m_i \circ n_i)_{i\in I} \\
     & = (r (m_i \circ n_i)_{i\in I})\\
     & = (rm_i \circ rn_i)_{i\in I}\\
     & = (rm_i)_{i\in I}\circ (rn_i)_{i\in I}\\
     \eta_r(m_i)_{i\in I} \circ \eta_r(n_i)_{i\in I} & = (rm_i)_{i\in I}\circ (rn_i)_{i\in I}\\
     \eta_r((m_i)_{i\in I}\circ (n_i)_{i\in I}) & = \eta_r(m_i)_{i\in I} \circ \eta_r(n_i)_{i\in I}
 \end{split}
\end{equation*}
\end{proof}

\end{document}